\definecolor{cerulean}{rgb}{0,0.25,0.5}
\definecolor{uniblue}{rgb}{0.08,0.42,0.63}
\definecolor{mikeblue}{RGB}{0,117,234}
\numberwithin{equation}{section}
\newcommand{\I}{\mathrm{I}}
\newcommand{\R}{\mathbb R}
\newcommand{\N}{\mathbb N}
\newcommand{\Z}{\mathbb Z}
\newtheoremstyle{plain}
  {10pt}
  {10pt}
  {\it}
  {0pt}
  {\bf}
  {}
  {\newline}
  {}
\newtheoremstyle{definition}
  {10pt}
  {10pt}
  {}
  {0pt}
  {\bf}
  {}
  {\newline}
  {}
\theoremstyle{plain}
\newtheorem{theorem}{Theorem}[section]
\newtheorem{coro}[theorem]{Corollary}
\newtheorem{lemma}[theorem]{Lemma}
\newtheorem{prop}[theorem]{Proposition}
\theoremstyle{definition}
\newtheorem{remark}[theorem]{Remark}
\title{$h$-principle for Curves with Prescribed Curvature}
\author[M.~Wasem]{Micha Wasem}
\address{Department of Mathematics, ETH Z\"urich, R\"amistrasse 101,
    8092 Z\"urich, Switzerland}
\email{micha.wasem@math.ethz.ch}
\date{\today}
\newcommand{\Che}{\mbox{\usefont{T2A}{\rmdefault}{m}{n}\CYRCH}}
\newcommand{\Ell}{\mbox{\usefont{T2A}{\rmdefault}{m}{n}\CYRL}}
\begin{document}
\maketitle
\begin{abstract}
We prove that every immersed $C^2$-curve $\gamma$ in $\R^n$, $n\geqslant 3$ with curvature $k_{\gamma}$ can be $C^1$-approximated by immersed $C^2$-curves having prescribed curvature $k>k_{\gamma}$. The approximating curves satisfy a $C^1$-dense $h$-principle. As an application we obtain the existence of $C^2$-knots of arbitrary positive curvature in each isotopy class, which generalizes a similar result by McAtee for $C^2$-knots of constant curvature.\end{abstract}
\section{Introduction}

The purpose of this note is to prove that if $n\geqslant 3$, any given immersed $C^2$-curve $\gamma$ in $\R^n$ with curvature $k_{\gamma}$ can be $C^1$-approximated by an immersed $C^2$-curve with prescribed curvature $k$, provided $k>k_{\gamma}$. It is shown that the sufficient condition $k>k_{\gamma}$ is almost optimal in the sense that $k\geqslant k_\gamma$ is necessary. Moreover, if the initial curve is embedded, we show that the initial and the approximating curve can be chosen to be isotopic.\\

Let $\I=[0,b]\subset\R$ be a compact interval. A curve $\gamma\in C^\ell(\I,\R^n),~n\geqslant 3$ is called \emph{closed} if it agrees in $0$ and $b$ to $\ell$ orders. The \emph{curvature} of an immersed curve $\gamma\in C^2(\I,\R^n)$ is defined as $k_{\gamma}:=|\dot {\mathrm T}|/|\dot\gamma|$, where $\mathrm T:=\dot\gamma/|\dot\gamma|$. For $n=3$, this formula reduces to the usual expression $k_{\gamma}=|\dot\gamma\times\ddot\gamma|/|\dot\gamma|^3$. We will tacitly use the identification $S^1\cong \R/2\pi\Z$ and make repeated use of the standard $C^\ell$-norms
$$
\|\gamma\|_{C^{\ell}(\I)}\coloneqq \sup_{t\in\I} \sum_{m\leqslant \ell}\left|\frac{\mathrm d^m\gamma(t)}{\mathrm dt^m}\right|.
$$
A homotopy $\gamma_s\in C^\ell(\I,\R^n),~s\in[0,1]$ is called \emph{regular} respectively an \emph{isotopy}, if $\gamma_s$ is an immersion respectively an embedding for all $s$. We are now ready to state our main result.
\begin{theorem}\label{main}
Let $\gamma_0\in C^2(\I,\R^n), ~n\geqslant 3$ be an immersed (embedded) curve. Then for every $k\in C^\infty(\I)$ satisfying $k>k_{\gamma_0}$ and every $\varepsilon>0$, there exists a regular homotopy (an isotopy) $\gamma_s\in C^2(\I,\R^n)$, $s\in[0,1]$ such that $\|\gamma_0-\gamma_s\|_{C^1(\I)}<\varepsilon$ for all $s$ and such that $k_{\gamma_1}=k$.
\end{theorem}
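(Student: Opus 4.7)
The plan is first to construct a single $\gamma_1\in C^2(\I,\R^n)$ with $k_{\gamma_1}=k$ and $\|\gamma_1-\gamma_0\|_{C^1}<\varepsilon$ by a convex-integration-style corrugation, and then to join $\gamma_0$ to $\gamma_1$ by the straight-line homotopy $\gamma_s\coloneqq(1-s)\gamma_0+s\gamma_1$. For $\varepsilon$ small enough, $\dot\gamma_s=(1-s)\dot\gamma_0+s\dot\gamma_1$ remains nonvanishing uniformly in $s$, so $\gamma_s$ is automatically a regular homotopy; since embeddings form a $C^1$-open subset of $C^2(\I,\R^n)$, the same homotopy is an isotopy whenever $\gamma_0$ is embedded. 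Everything therefore reduces to the one-shot construction of $\gamma_1$.

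For that construction I would reparametrize $\gamma_0$ by arclength (so that $T_0\coloneqq\dot\gamma_0\in S^{n-1}$ and $k_{\gamma_0}=|\dot T_0|$) and pick a smooth orthonormal pair $N_1(t),N_2(t)$ of unit normals to $T_0$, which exists because $n-1\geqslant 2$ and $\I$ is contractible. The ansatz is
\[
T_1(t)\;\coloneqq\;\cos c\,T_0(t)+\sin c\bigl(\cos\beta(t)\,N_1(t)+\sin\beta(t)\,N_2(t)\bigr),\qquad \gamma_1(t)\coloneqq\gamma_0(0)+\int_0^t T_1(\tau)\,d\tau,
\]
with $c>0$ a small constant and $\beta\in C^\infty(\I)$ still to be determined. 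Since $|T_1|\equiv 1$ the curve $\gamma_1$ is unit-speed, so the prescribed-curvature condition reduces to $|\dot T_1|=k$.

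Writing out $|\dot T_1|^2=k^2$ produces a quadratic equation $A\dot\beta^2+B\dot\beta+C=0$ in the unknown $\dot\beta$, with coefficients depending smoothly on $(c,\beta,t)$ through $k_{\gamma_0}$, $k$, and the connection coefficients of the frame. At leading order in $c$ its discriminant equals $4c^2(k^2-k_{\gamma_0}^2)+O(c^3)$, strictly positive by hypothesis, so the quadratic formula yields $\dot\beta$ as an explicit smooth function of $(t,\beta)$. This is a first-order ODE on $\I$, solvable globally by the classical existence theorem; for small $c$ the solution satisfies $|\dot\beta|\sim 1/c$, exactly the high-frequency oscillation corrugation requires. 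The pointwise bound $\|T_1-T_0\|_{C^0}\leqslant 2c$ is immediate, while an integration-by-parts estimate on the oscillatory part of $\int_0^{\cdot}(T_1-T_0)\,d\tau$ gives $\|\gamma_1-\gamma_0\|_{C^0}=O(c)$ as well. Hence $\|\gamma_1-\gamma_0\|_{C^1}=O(c)$, and a sufficiently small choice of $c$ finishes the approximation.

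The step I expect to be the main obstacle is the global solution and quantitative control of this ODE: the coefficients $A,B,C$ depend on the unknown $\beta$ through the position of $\cos\beta\,N_1+\sin\beta\,N_2$ in the normal plane, so the equation is genuinely nonlinear, and one must keep the discriminant bounded away from zero along the entire trajectory. This is precisely where the strict inequality $k>k_{\gamma_0}$ is consumed, and it also underlies the remark in the abstract that $k\geqslant k_{\gamma_0}$ is necessary. A secondary subtlety arises when $\gamma_0$ happens to be closed: the monodromy of the normal frame and the winding of $\beta$ must be made to match up so that $T_1$, and hence $\gamma_1$, is periodic, which the second normal direction gives just enough slack to arrange via a small compensating adjustment of the phase $\beta(0)$ and a low-frequency correction of the amplitude.
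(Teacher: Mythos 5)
Your proposal takes a genuinely different route from the paper, and for the open-interval case it is essentially sound. The paper proceeds iteratively in the Nash--Kuiper manner: it constructs a smooth building block $\Che$ from the Bessel function $J_0$ whose second $t$-derivatives satisfy a circle equation, adds a perturbation of amplitude $\lambda^{-2}$ directly to $\gamma$, and only reduces the curvature defect $k^2-k_\gamma^2$ by a factor $\delta$ at each step (the term $O(\lambda^{-1})$ in the step estimates prevents an exact hit). One then needs the $C^2$-Cauchy estimate \eqref{curvesc2}, the summability hypothesis \eqref{summenepsilon}, and Ghomi's genericity lemma to guarantee $k_{\gamma_0}>0$ so that the step's Frenet-type normal $\xi=\ddot{\bar\gamma}_{i-1}$ does not vanish. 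Your construction avoids all of this by perturbing the \emph{unit tangent} on the sphere rather than $\gamma$ itself: because $|T_1|\equiv 1$ is enforced by the ansatz, the speed factor in $k_\gamma=|\dot T|/|\dot\gamma|$ causes no error, and the quadratic in $\dot\beta$ can be solved to hit $k$ exactly in one shot. Moreover, since you use an arbitrary normal frame rather than the Frenet frame, vanishing curvature of $\gamma_0$ is not an issue, so Ghomi's lemma is not needed. That is a real simplification: no iteration, no Bessel functions, no $C^2$-convergence bookkeeping.

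There are two points that need attention. The lighter one: the obstacle you flag (``keeping the discriminant bounded away from zero along the trajectory'') is in fact not an obstacle. Writing $\dot T_1 = V_1 + \dot\beta\,V_2$ with $V_2=\sin c\,(-\sin\beta\,N_1+\cos\beta\,N_2)$ and $V_1=\cos c\,\dot T_0+\sin c(\cos\beta\,\dot N_1+\sin\beta\,\dot N_2)$, the discriminant is $4\langle V_1,V_2\rangle^2 + 4\sin^2 c\,(k^2-|V_1|^2)$, which is $\geqslant 4\sin^2 c\,(k^2-|V_1|^2)$; since $|V_1|^2 = k_{\gamma_0}^2 + O(c)$ uniformly in $(t,\beta)$ on the compact $\I\times S^1$, a fixed small $c$ makes the discriminant uniformly positive, and the right-hand side of the ODE for $\beta$ is then bounded and $C^1$ in $\beta$, giving a unique global solution by Picard--Lindel\"of. (Your expansion of the discriminant as $4c^2(k^2-k_{\gamma_0}^2)+O(c^3)$ drops a nonnegative $O(c^2)$ term from $\langle V_1,V_2\rangle^2$, but this only helps.)

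The heavier one is the closed case, which you rightly flag but underestimate. The theorem's isotopy clause is interesting precisely when $\gamma_0$ is a knot, i.e.\ closed, and Corollary \ref{knots} relies on the construction producing closed curves from closed curves. In your approach this imposes $n+1$ scalar constraints: periodicity of $\beta$ modulo $2\pi$ (one constraint, since $\beta(b)$ is determined by $\beta(0)$ through the ODE), and $\int_0^b T_1\,dt=0$ so that $\gamma_1(b)=\gamma_1(0)$ ($n$ constraints; $\int_0^b T_0\,dt=0$ already). A phase shift of $\beta(0)$ and ``a low-frequency correction of the amplitude'' supply at best two parameters, which is not enough; one would have to introduce an $n$-parameter family of low-frequency modulations and run a degree or implicit-function argument to kill the closure defect. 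The paper sidesteps this entirely: $\Che$ is built to be $2\pi$-periodic in $t$ (the normalisation $-\tfrac{t}{2\pi}\int_0^{2\pi}\Gamma$ is exactly this), so taking $\lambda\in(2\pi/\varphi(b))\N$ makes each step preserve closedness for free. If you want your one-shot argument to cover Corollary \ref{knots}, this closure step needs an actual proof; for the theorem on the interval $\I$ as literally stated, your argument goes through.

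Finally, a cosmetic point: the prescribed curvature must be transported through the arclength reparametrization (the target for $|\dot T_1|$ is $\bar k = k\circ\varphi^{-1}$, not $k$ itself), as the paper does at the start of Proposition \ref{stepcurves}; this does not affect the substance of your argument.
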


We will start by observing that the sufficient condition $k>k_{\gamma_0}$ is almost optimal in the sense that $k\geqslant k_{\gamma_0}$ is necessary for the statement to hold:

\begin{prop}
Let $\gamma_i\in C^2(\I,\R^n)$ be a sequence of immersed curves such that $k_{\gamma_i}=k$ and let $\gamma\in C^2(\I,\R^n)$ be an immersion. If $\lim\limits_{i\to \infty}\|\gamma-\gamma_i\|_{C^1(\I)}=0$, then $k_\gamma\leqslant k$.
\end{prop}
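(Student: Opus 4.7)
The plan is to work with an integral formulation that only uses $C^1$-convergence of the $\gamma_i$, since the second derivatives $\ddot\gamma_i$ (and hence $\dot T_i$) need not converge in any reasonable sense. The key algebraic input is that $k_{\gamma_i}=k$ translates to the pointwise identity $|\dot T_i(t)|=k(t)|\dot\gamma_i(t)|$, which I would like to integrate against test intervals and then pass to the limit.

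First I would establish that $T_i\to T$ uniformly on $\I$. Since $\gamma$ is an immersion on the compact interval $\I$, there exists $c>0$ with $|\dot\gamma|\geqslant c$; combined with $\dot\gamma_i\to\dot\gamma$ uniformly this gives $|\dot\gamma_i|\geqslant c/2$ for $i$ large, and then
$$
T_i = \frac{\dot\gamma_i}{|\dot\gamma_i|} \longrightarrow \frac{\dot\gamma}{|\dot\gamma|} = T
$$
uniformly. Next, for any $s<t$ in $\I$, the triangle inequality for vector-valued integrals gives
$$
|T_i(t) - T_i(s)| \leqslant \int_s^t |\dot T_i(\tau)|\, \mathrm d\tau = \int_s^t k(\tau)\,|\dot\gamma_i(\tau)|\,\mathrm d\tau,
$$
where the equality is the hypothesis $k_{\gamma_i}=k$. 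Because $k$ is continuous on $\I$ and $\dot\gamma_i\to\dot\gamma$ uniformly, the integrand converges uniformly on $[s,t]$, and the left-hand side converges by the uniform convergence $T_i\to T$. Taking $i\to\infty$ yields
$$
|T(t) - T(s)| \leqslant \int_s^t k(\tau)\,|\dot\gamma(\tau)|\,\mathrm d\tau.
$$

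Finally, since $\gamma\in C^2$ implies $T\in C^1$, I would divide by $t-s>0$ and let $s\uparrow t$: the left side tends to $|\dot T(t)|$ while the right side tends to $k(t)|\dot\gamma(t)|$ by continuity of the integrand. Hence $|\dot T(t)|\leqslant k(t)|\dot\gamma(t)|$ for every $t\in\I$, which rearranges to $k_\gamma(t)\leqslant k(t)$ as desired.

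The only non-obvious step is the one that motivates the whole argument, namely recognising that one must not attempt to pass to the limit in the pointwise identity $|\dot T_i|=k|\dot\gamma_i|$ (which would need $C^2$-convergence we do not have), but rather in its integrated form, where only $C^0$-information on the first derivatives is required. Once this integrated inequality is in hand, the final step is just the fundamental theorem of calculus for the $C^1$ limit $T$.
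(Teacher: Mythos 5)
Your proof is correct, and it takes a genuinely different route from the paper. The paper cites the lower semicontinuity of the integral curvature (turning) functional with respect to uniform convergence from Alexandrov--Reshetnyak, a general result valid even for irregular ($C^0$) curves, and then localizes by shrinking the interval. You instead give a self-contained elementary argument: you first show $T_i\to T$ uniformly (which uses the immersion bound $|\dot\gamma|\geqslant c>0$ and the $C^1$-convergence, and is where the $C^1$ hypothesis is essential), then apply the triangle inequality to $T_i(t)-T_i(s)=\int_s^t\dot T_i$ to get $|T_i(t)-T_i(s)|\leqslant\int_s^t k|\dot\gamma_i|$, pass to the limit, and finally differentiate by shrinking $[s,t]$ to a point. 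In effect you prove the precise piece of semicontinuity needed rather than invoking the general theorem; the triangle inequality for vector integrals is exactly where the one-sided bound originates. The paper's route is shorter on the page and makes clear the result is really about uniform convergence and turning; your route avoids external machinery and exposes the mechanism. One small remark: the paper's display is written with $\int_U k_\gamma\,\mathrm dt$, but the Alexandrov--Reshetnyak functional is the turning $\int k\,\mathrm ds=\int k|\dot\gamma|\,\mathrm dt$; your version correctly carries the $|\dot\gamma|$ factor throughout, and the two reconcile because $|\dot\gamma_i|\to|\dot\gamma|$ uniformly and $|\dot\gamma|>0$.
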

\begin{proof}
Let $t_0\in\I$ and let $U$ be a relatively open interval containing $t_0$. From the lower semicontinuity of the integral curvature functional (see \cite[Theorem 5.1.1, p. 120-121]{alexandrov}) with respect to uniform convergence we obtain
$$
\int_{U}k_\gamma(t)\,\mathrm dt \leqslant \liminf_{i\to\infty} \int_{U}k_{\gamma_i}(t)\,\mathrm dt=\int_{U}k(t)\,\mathrm dt.
$$
By arbitrariness of $U$ and continuity of $k_\gamma$ and $k$, it follows that $k_\gamma(t_0)\leqslant k(t_0)$.
\end{proof}

The variant of Theorem \ref{main} for closed curves (see Corollary \ref{knots}) generalizes a result due to McAtee \cite{mcatee}, who proved that there exists a $C^2$ knot of constant curvature in each isotopy class building upon the work of Koch and Engelhardt \cite{engelhardt}, who gave the first explicit construction a non-planar closed $C^2$-curve of constant curvature. Ghomi proved that curves with constant curvature satisfy a relative $C^1$-dense $h$-principle (see \cite{ghomi}) and proved the existence of smooth knots with constant curvature in each isotopy class that are $C^1$-close to a given initial knot. Observe that we do not achieve smooth curves with prescribed curvature, however, in our Theorem, the curvature need not be constant. Both, Ghomi's result and Theorem \ref{main} are -- in the language of Gromov \cite{gromov} and Eliashberg, Mishachev \cite{eliashberg} -- manifestations of a $C^1$-dense $h$-principle.\\

We will prove Theorem \ref{main} directly using a variant of convex integration \`a la Nash and Kuiper. The strategy of the proof consists in reducing the curvature defect $k-k_{\gamma}>0$ successively in steps while keeping careful control on the $C^2$-norm of the resulting maps during the process. 
\subsection*{Acknowledgements}
This article is part of my PhD thesis. I would like to warmly thank my advisor Norbert Hungerb\"uhler for his guidance, support and interest in this work. Furthermore I would like to thank Anand Dessai, Felix Hensel, Berit Singer, Nicolas Weisskopf and Thomas Mettler for helpful comments.
\newpage\section{Step}

The main building block for a step is given by a function whose second derivatives satisfy a circle equation and a suitable estimate:
\begin{lemma}
There exists $\Che\in C^\infty(\R\times S^1,\R^2)$, $(s,t)\mapsto\Che(s,t)$ satisfying
\begin{align}
|\partial_{tt}\Che(s,t)|&\leqslant C|s|\label{c1estimatecurves}\\
(1+\partial_{tt}\Che_1(s,t))^2+(\partial_{tt}\Che_2(s,t))^2 &= 1+s^2\label{circleequation}\end{align}
for some constant $C>0$.

\end{lemma}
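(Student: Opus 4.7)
The plan is to parameterize the circle equation \eqref{circleequation} explicitly. Define $\Che$ by specifying its second $t$-derivative,
$$
\partial_{tt}\Che(s,t) = \bigl(-1 + \sqrt{1+s^2}\,\cos\alpha(s,t),\ \sqrt{1+s^2}\,\sin\alpha(s,t)\bigr),
$$
for an angle function $\alpha\in C^\infty(\R\times S^1)$ to be determined. Then \eqref{circleequation} holds automatically, and $\Che$ itself is recovered by integrating twice in $t$. For this double integration to close up periodically, both components of $\int_0^{2\pi}\partial_{tt}\Che\,dt$ must vanish, i.e.\
$$
\frac{1}{2\pi}\int_0^{2\pi}\cos\alpha(s,t)\,dt = \frac{1}{\sqrt{1+s^2}},\qquad \frac{1}{2\pi}\int_0^{2\pi}\sin\alpha(s,t)\,dt = 0.
$$
The secondary periodicity (closing $\Che$ itself after the second integration) is then always achievable by absorbing a smooth function of $s$ as the integration constant.

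I take the ansatz $\alpha(s,t) = f(s)\sin t$ with $f$ a smooth odd function. The substitution $t\mapsto 2\pi-t$ shows $\sin(f(s)\sin t)$ is $t$-odd about $\pi$, so the second integral condition holds for free; the first reduces to the Bessel-function identity
$$
J_0(f(s)) = \frac{1}{\sqrt{1+s^2}}.
$$
Since $J_0$ is strictly decreasing from $1$ to $0$ on $[0,j_{0,1}]$ (with $j_{0,1}$ its first positive zero), a unique $f(s)\in[0,j_{0,1})$ exists for each $s\in\R$.

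The main obstacle is the smoothness of $f$ at $s=0$, where $J_0'(0)=0$ rules out the direct use of the inverse function theorem. This is resolved by exploiting even parity: one has $1-J_0(x) = x^2\,\widetilde J(x^2)$ and $1 - 1/\sqrt{1+s^2} = s^2\,H(s^2)$ with $\widetilde J, H\in C^\infty$ and $\widetilde J(0), H(0)>0$, so the equation becomes $f^2\,\widetilde J(f^2) = s^2\,H(s^2)$. The implicit function theorem applied in the variables $(s^2,f^2)$ yields $f^2 = s^2\,Q(s^2)$ with $Q\in C^\infty$ and $Q(0)>0$, and then $f(s) := s\sqrt{Q(s^2)}$ is a smooth odd function solving the Bessel equation.

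Finally, the estimate \eqref{c1estimatecurves} reduces to bounding
$$
|\partial_{tt}\Che(s,t)|^2 = 2 + s^2 - 2\sqrt{1+s^2}\,\cos(f(s)\sin t)
$$
by $C^2 s^2$. For $|s|$ small, Taylor expansion together with $f(s) = O(|s|)$ gives $|\partial_{tt}\Che|^2 = O(s^2)$; for $|s|\geq 1$, boundedness of $f$ by $j_{0,1}$ gives the crude bound $|\partial_{tt}\Che|^2 \leq 2 + s^2 + 2\sqrt{1+s^2}$, which is itself $\leq C^2 s^2$ for a suitable $C$. Combining the two regimes yields \eqref{c1estimatecurves} uniformly in $(s,t)$.
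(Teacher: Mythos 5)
Your proof is correct and follows the same overall strategy as the paper: parameterize the circle equation via an angle $\alpha(s,t)=f(s)\sin t$, reduce periodicity to the Bessel identity $J_0(f(s))=1/\sqrt{1+s^2}$, and invert $J_0$. You differ from the paper in two places, and both of your variants are valid and arguably cleaner.

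For smoothness of $f$ at $s=0$, the paper applies the implicit function theorem to $F(s,r)=J_0(r^{1/2})-w(s)$ at $(0,0)$, solving for $r=h(s)=f(s)^2$ and then asserting that $f(s)=\operatorname{sgn}(s)\sqrt{h(s)}$ is smooth; your version makes explicit what the paper leaves implicit by factoring both sides of $1-J_0(f)=1-w(s)$ through their even parts, $f^2\widetilde J(f^2)=s^2H(s^2)$, so that the IFT in $(s^2,f^2)$ and the Hadamard-type factorization $f(s)=s\sqrt{Q(s^2)}$ yield smoothness and oddness in one stroke. (One small presentational caveat: you should note, as the paper does, that away from $s=0$ smoothness follows from $J_0'\neq 0$ on $(0,j_{0,1}]$; your argument near $0$ and this remark together cover all of $\R$.)

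For the estimate \eqref{c1estimatecurves}, the paper writes $\partial_t\Gamma(s,t)=\int_0^s\partial_s\partial_t\Gamma(r,t)\,\mathrm dr$ and proves a uniform bound on the mixed derivative, which requires the asymptotic analysis of $f'(s)/w(s)$ as $s\to\pm\infty$ using that $j_{0,1}$ is a simple zero of $J_0$. Your route computes $|\partial_{tt}\Che|^2=2+s^2-2\sqrt{1+s^2}\cos(f(s)\sin t)=(\sqrt{1+s^2}-1)^2+2\sqrt{1+s^2}(1-\cos(f(s)\sin t))$ and bounds the two pieces directly by $O(s^4)$ and $O(s^2)$ respectively for $|s|\leqslant 1$ (using $1-\cos x\leqslant x^2/2$ and $f(s)^2=s^2Q(s^2)$), while for $|s|\geqslant 1$ the trivial bound $(\sqrt{1+s^2}+1)^2\leqslant(\sqrt2+1)^2s^2$ suffices. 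This is more elementary — it avoids both the integration-in-$s$ step and the discussion of the simple zero — at the cost of being less uniform in flavor, but the compactness/continuity glue in the middle range $\{\delta\leqslant|s|\leqslant 1\}$ is immediate. Both proofs are sound; yours is a leaner version of the same construction.
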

\begin{proof}
We will first prove that there exists a function $f\in C^\infty(\R)$ such that
$$J_0(f(s))=\frac{1}{\sqrt{1+s^2}}\eqqcolon w(s),$$
where $J_0$ denotes the zeroth Bessel function of the first kind:
For the local existence around zero (see also \cite[Lemma 2]{delellis}), consider the function
$$
F(s,r)\coloneqq J_0(r^{\frac{1}{2}})-w(s)
$$
Direct computations involving the series expansion of $J_0$ yield $F(0,0)=0$ and $\partial_rF(0,0)=-\frac{1}{4}$. The implicit function Theorem applies and we obtain $\delta>0$ and a function $h\in C^{\infty}((-\delta,\delta))$ with $h(0)=0$ such that $F(s,h(s))=0.$ Moreover, since $\partial_sF(0,0)=0$ and $\partial_s^2F(0,0)=1$, we find $h'(0)=0$ and $h''(0)=4$. Hence we get a smooth $f(s)$ satisfying $f^2(s)=h(s)$. Direct computations yield $f(0)=0$ and $f'(0)=\sqrt{2}$.
In order to prove global existence, observe that $w(s)\coloneqq (1+s^2)^{-\sfrac{1}{2}}$ is smooth and takes values in $(0,1]$. Since $J_0:[0,\mu]\to[0,1]$ is a bijection ($\mu$ being the smallest positive zero of $J_0$) and $J'_0$ doesn't admit any zero on $(0,\mu]$, its inverse $J_0^{-1}$ is in $C^{\infty}([0,1))$. Now set
$$
f(s)\coloneqq \operatorname{sgn}s\cdot J_0^{-1}\left(\frac{1}{\sqrt{1+s^2}}\right).
$$
This function is clearly smooth on $\R\setminus\{0\}$. Since $f$ corresponds around zero to the function constructed by means of the implicit function Theorem, $f\in C^\infty(\R)$.
With this choice of $f$, let $\Gamma:\R^2\to\R^2$,
$$
\Gamma(s,t)\coloneqq \int_0^t \left(\sqrt{1+s^2}\begin{pmatrix}\cos (f(s)\sin u)\\ \sin(f(s)\sin u)\end{pmatrix}-\begin{pmatrix}1\\ 0\end{pmatrix}\right)\mathrm du.
$$
$\Gamma$ is $2\pi$-periodic in the second argument, hence $\Gamma:\R\times S^1\to \R^2$, and it holds that
\begin{equation}
|\partial_t \Gamma(s,t)|\leqslant C|s|,\label{c1-estimate}
\end{equation}
for some constant $C>0$. For the periodicity, we compute
$$\begin{aligned}
\Gamma(s,t+2\pi)-\Gamma(s,t) & = \int_t^{t+2\pi} \left(\sqrt{1+s^2}\begin{pmatrix}\cos (f(s)\sin u)\\ \sin(f(s)\sin u)\end{pmatrix}-\begin{pmatrix}1\\ 0\end{pmatrix}\right)\mathrm du\\
& = \int_0^{2\pi} \left(\sqrt{1+s^2}\begin{pmatrix}\cos (f(s)\sin u)\\ \sin(f(s)\sin u)\end{pmatrix}-\begin{pmatrix}1\\ 0\end{pmatrix}\right)\mathrm du\\
& =2\pi\begin{pmatrix}\sqrt{1+s^2}J_0(f(s))-1 \\ 0\end{pmatrix}=\begin{pmatrix}0\\0\end{pmatrix}.
\end{aligned}$$
In order to prove \eqref{c1-estimate}, observe that since $\partial_t\Gamma(0,t)=0$, integrating in $s$ yields
$$\partial_t\Gamma(s,t)=\int_0^s\partial_s\partial_t\Gamma(r,t)\mathrm dr,$$
and we need to show that $|\partial_s\partial_t\Gamma|$ is bounded by some constant $C$. We compute
$$
\partial_s\partial_t\Gamma(s,t) =\frac{s}{\sqrt{1+s^2}}\begin{pmatrix}\cos(f(s)\sin t)\\ \sin(f(s)\sin t)\end{pmatrix}+\sqrt{1+s^2}f'(s)\sin t\begin{pmatrix}-\sin(f(s)\sin t)\\ \cos(f(s)\sin t)\end{pmatrix},
$$
hence
$$\begin{aligned}
|\partial_s\partial_t\Gamma(s,t)|^2& \leqslant \frac{s^2}{1+s^2}+(1+s^2)[f'(s)]^2\\
& \leqslant 1+\left(\frac{f'(s)}{w(s)}\right)^2.
\end{aligned}
$$
We claim that the second term vanishes as $s\to\pm\infty$. Recall that
$$|f'(s)|=|(J_0^{-1})'(w(s))w'(s)|$$
for $s\ne 0$. We get
$$
\left|\frac{f'(s)}{w(s)}\right| = \left| \left(J_0^{-1}\right)'(w(s))\frac{w'(s)}{w(s)}\right|.
$$
Now $\mu$ (the smallest positive zero of $J_0$) is a simple zero (see for example \cite{stegun} p. 370), thus $\lim\limits_{s\to\pm\infty}\left|\left(J_0^{-1}\right)'(w(s))\right|=\left|\frac{1}{J_0'(\mu)}\right|<\infty$ and the claim follows from
$$
\lim_{s\to\pm\infty}\frac{w'(s)}{w(s)}=0.
$$
This implies that $|\partial_s\partial_t\Gamma|$ is bounded by a constant and we obtain \eqref{c1-estimate}.
Let now
$$\Che(s,t)\coloneqq \int_0^t\Gamma(s,u)\,\mathrm du -\frac{t}{2\pi}\int_0^{2\pi}\hspace{-0.1cm}\Gamma(s,u)\,\mathrm du,$$
By construction, $\Che\in C^\infty(\R\times S^1,\R^2)$ has all the desired properties.
\end{proof}\newpage
\begin{prop}[$i$-th Step]\label{stepcurves}
Let $\gamma_{i-1}\in C^\infty(\I,\R^n)$, $n\geqslant 3$ be an immersed curve and let $k\in C^\infty(\I)$ such that $0<k_{\gamma_{i-1}}<k$. Then there exists an immersed curve $\gamma_{i}\in C^\infty(\I,\R^n)$ satisfying
\begin{align}
\|\gamma_{i}-\gamma_{i-1}\|_{C^1(\I)} & <\varepsilon,\label{curvesc1}\\
\left\|\ddot\gamma_{i}-\ddot\gamma_{i-1}\right\|_{C^0(\I)}& < C\|\dot\gamma_{i-1}\|^2_{C^0(\I)}\|k^2 - k_{\gamma_{i-1}}^2\|^{\sfrac{1}{2}}_{C^0(\I)},\label{curvesc2}\\
\|k^2-k_{\gamma_{i}}^2\|_{C^0(\I)}& <\varepsilon,\label{curvaturec0}\\
0 <k_{\gamma_i}&<k,\label{curvatureok}
\end{align}
where $C$ is the constant from estimate \eqref{c1estimatecurves}.
\end{prop}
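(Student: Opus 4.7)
The plan is to adapt the Nash--Kuiper convex integration scheme and superimpose one high-frequency corrugation on $\gamma_{i-1}$ in the $2$-plane transverse to $\dot\gamma_{i-1}$, using the function $\Che$ from the preceding lemma as the building block. Set $v\coloneqq|\dot\gamma_{i-1}|$, $\mathrm T\coloneqq\dot\gamma_{i-1}/v$, and let $\mathrm N$ be the (smooth) principal normal of $\gamma_{i-1}$, which is defined because $k_{\gamma_{i-1}}>0$. Let $\mathrm M$ be any smooth unit section of the rank-$(n-2)$ subbundle $\{\mathrm T,\mathrm N\}^\perp$ (the binormal when $n=3$; such a smooth $\mathrm M$ exists on the contractible interval $\I$ for any $n\geqslant 3$). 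Choose a small parameter $\sigma>0$, write $\tilde k\coloneqq\sqrt{k^2-\sigma}$ so that $k_{\gamma_{i-1}}<\tilde k<k$ for $\sigma$ small enough, define $s(t)\coloneqq\sqrt{\tilde k(t)^2-k_{\gamma_{i-1}}(t)^2}\,/\,k_{\gamma_{i-1}}(t)$, and propose the ansatz
$$
\gamma_i(t)\coloneqq \gamma_{i-1}(t)+\frac{v(t)^2\,k_{\gamma_{i-1}}(t)}{\lambda^2}\Bigl(\Che_1(s(t),\lambda t)\,\mathrm N(t)+\Che_2(s(t),\lambda t)\,\mathrm M(t)\Bigr),
$$
with a large frequency $\lambda$ to be chosen at the end.

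Next I would expand $\dot\gamma_i$ and $\ddot\gamma_i$ in powers of $\lambda^{-1}$. Each chain-rule derivative landing in the second slot of $\Che_j$ contributes a factor $\lambda$, while every derivative of $s,v,k_{\gamma_{i-1}},\mathrm N,\mathrm M$ costs a factor $\lambda^{-1}$ relative to the leading term. Since these coefficients are $C^\infty$ on the compact interval $\I$, their derivatives are uniformly bounded, so $\dot\gamma_i-\dot\gamma_{i-1}=O(\lambda^{-1})$ and
$$
\ddot\gamma_i-\ddot\gamma_{i-1}=v^2 k_{\gamma_{i-1}}\bigl(\partial_{tt}\Che_1\,\mathrm N+\partial_{tt}\Che_2\,\mathrm M\bigr)+O(\lambda^{-1}).
$$
Combined with the decomposition $\ddot\gamma_{i-1}=\dot v\,\mathrm T+v^2 k_{\gamma_{i-1}}\,\mathrm N$ and $\mathrm M\perp\mathrm T,\mathrm N$, the circle equation \eqref{circleequation} gives $|\ddot\gamma_i|^2-(\dot\gamma_i\cdot\ddot\gamma_i)^2/|\dot\gamma_i|^2=v^4 k_{\gamma_{i-1}}^2(1+s^2)+O(\lambda^{-1})=v^4\tilde k^2+O(\lambda^{-1})$, and division by $|\dot\gamma_i|^4=v^4+O(\lambda^{-1})$ yields $k_{\gamma_i}^2=\tilde k^2+O(\lambda^{-1})$.

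From here all four estimates are bookkeeping. Bound \eqref{curvesc1} is immediate from the $\lambda^{-1}$ factor in $\dot\gamma_i-\dot\gamma_{i-1}$. Bound \eqref{curvesc2} is the central one: the leading term above together with \eqref{c1estimatecurves} gives $\|\ddot\gamma_i-\ddot\gamma_{i-1}\|_{C^0}\leqslant C\|v\|_{C^0}^2\,k_{\gamma_{i-1}}|s|+O(\lambda^{-1})=C\|\dot\gamma_{i-1}\|_{C^0}^2\sqrt{\tilde k^2-k_{\gamma_{i-1}}^2}+O(\lambda^{-1})$, which sits strictly below $C\|\dot\gamma_{i-1}\|_{C^0}^2\|k^2-k_{\gamma_{i-1}}^2\|_{C^0}^{1/2}$ as soon as $\lambda$ is large, because $\tilde k<k$. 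Bound \eqref{curvaturec0} follows by first choosing $\sigma<\varepsilon/2$ and then $\lambda$ so large that the $O(\lambda^{-1})$ error in $k^2-k_{\gamma_i}^2$ stays below $\varepsilon/2$; and \eqref{curvatureok} is obtained by additionally requiring $\lambda$ so large that the same remainder is smaller than both $k-\tilde k$ and $\tilde k$ uniformly on $\I$, so $0<k_{\gamma_i}<k$.

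The main obstacle I anticipate is turning the informal $O(\lambda^{-1})$ symbol into a quantitative bound: the hidden constant depends on $\|v\|_{C^2}$, $\|k_{\gamma_{i-1}}\|_{C^2}$, $\|s\|_{C^2}$, $\|\mathrm N\|_{C^2}$, $\|\mathrm M\|_{C^2}$ and on the first two $s$-derivatives of $\Che$, all of which are finite on the compact interval $\I$ by smoothness and by the lemma, so the large-$\lambda$ argument closes. A secondary technical point is the strict upper bound $k_{\gamma_i}<k$ required by \eqref{curvatureok}: aiming directly at curvature $k$ would risk overshooting due to the oscillatory error, which is precisely what the small retraction $\tilde k=\sqrt{k^2-\sigma}<k$ absorbs.
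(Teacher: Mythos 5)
Your argument is correct and uses the same convex integration ansatz built on the function $\Che$ and the circle equation \eqref{circleequation}, but it organizes the bookkeeping differently from the paper. The paper first reparametrizes $\gamma_{i-1}$ by arc length, so that $\xi=\ddot{\bar\gamma}_{i-1}$ is already the curvature vector of length $k_{\bar\gamma_{i-1}}$ orthogonal to the tangent; you instead work in the original parametrization and compensate by inserting the explicit factor $v^2 k_{\gamma_{i-1}}$ into the amplitude of the ansatz. These are equivalent; your verification that $|\ddot\gamma_{i,\perp}|^2 = v^4 k_{\gamma_{i-1}}^2(1+s^2) + O(\lambda^{-1})$ and division by $|\dot\gamma_i|^4$ is the same computation the paper does after pulling back along $\varphi$. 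A second cosmetic difference: the paper builds in its safety margin multiplicatively via the factor $(1-\delta)$ inside the amplitude $a$, whereas you retract the target curvature additively to $\tilde k=\sqrt{k^2-\sigma}$; both serve the identical purpose of keeping the new curvature strictly below $k$ and of absorbing the $O(\lambda^{-1})$ error. The paper's reparametrization buys cleaner leading-order formulas (no $v$'s to carry) at the cost of an extra composition with $\varphi$ and $\varphi^{-1}$ in the final estimates; your version saves that but requires tracking the $v^2$ Jacobian. One small omission in your write-up: the proposition asserts that $\gamma_i$ is again an \emph{immersed} curve, and you should note (as the paper does) that this follows from $\|\gamma_i-\gamma_{i-1}\|_{C^1}<\varepsilon$ together with the openness of immersions in $C^1(\I,\R^n)$ for $\varepsilon$ small; it is an easy but necessary remark for the induction in the proof of Theorem \ref{main} to run.
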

\begin{proof}
First consider the map $\varphi:\I\to\mathrm J$ given by
$$
t\mapsto \int_0^t|\dot\gamma_{i-1}(u)|\,\mathrm du
$$
and consider $\bar\gamma_{i-1}\coloneqq \gamma_{i-1}\circ\varphi^{-1}\in C^\infty(\mathrm J,\R^n)$ and $\bar k\coloneqq k\circ\varphi^{-1}\in C^\infty(\mathrm J)$, where $\mathrm J=[0,\varphi(b)]$. Let $\lambda>0$, $0<\delta<1$, $\xi = \ddot{\bar\gamma}_{i-1}$ and $\zeta\in(\operatorname{span}\{\dot{\bar\gamma}_{i-1},\xi\})^\perp$ such that $|\zeta|=|\xi|$. The vector field $\zeta$ can be chosen to be closed if $\gamma_{i-1}$ is closed. Indeed, the restriction of the (trivial) tangent bundle of $\R^n$ onto the closed curve is trivial and $\dot\gamma$ and $\xi$ are two orthogonal sections in this bundle. Hence there exist $n-2$ additional linearly independent sections that are orthogonal to $\operatorname{span}\{\dot{\bar\gamma}_{i-1},\xi\}$. If $n=3$, one can take $\zeta=\dot{\bar\gamma}_{i-1}\times \xi$. For
$$
a(t)\coloneqq\sqrt{(1-\delta)\left(\frac{\bar k^2(t)}{|\xi(t)|^2}-1\right)},
$$
let
\begin{equation}\label{ansatzmitckurven}
\bar{\gamma}_{i}(t)=\bar\gamma_{i-1}(t)+\frac{1}{\lambda^2}\bigg[\Che_1(a(t),\lambda t)\xi(t)+\Che_2(a(t),\lambda t)\zeta(t)\bigg].
\end{equation}
The first and the second derivative of $\bar{\gamma}_i$ are given by
$$
\begin{aligned}
\dot{\bar{\gamma}}_{i} & =\dot{\bar\gamma}_{i-1}+\frac{1}{\lambda}\bigg[\partial_t\Che_1(a,\lambda \cdot)\xi+\partial_t\Che_2(a,\lambda \cdot)\zeta\bigg]+O(\lambda^{-2})\\
\ddot{\bar{\gamma}}_{i} & =\ddot{\bar\gamma}_{i-1}+\partial_{tt}\Che_1(a,\lambda \cdot)\xi+\partial_{tt}\Che_2(a,\lambda \cdot)\zeta+O(\lambda^{-1}).
\end{aligned}
$$
This implies $\|\bar{\gamma}_{i}-\bar\gamma_{i-1}\|_{C^0(\mathrm J)}=O(\lambda^{-2})$ and $\|\bar{\gamma}_{i}-\bar\gamma_{i-1}\|_{C^1(\mathrm J)}=O(\lambda^{-1})$. Setting $\gamma_i = \bar\gamma_i\circ\varphi$ we obtain \eqref{curvesc1} from choosing $\lambda\gg 1$ and from
$$
\|\gamma_{i}-\gamma_{i-1}\|_{C^1(\I)}\leqslant \left(O(\lambda^{-1})+\|\dot\varphi\|_{C^0(\I)}\right)O(\lambda^{-1}).
$$
Since $|\xi|=k_{\bar\gamma_{i-1}}$, using \eqref{c1estimatecurves} and a suitable choice of $\lambda$ yields
$$\begin{aligned}
|\ddot{\bar{\gamma}}_{i}-\ddot{\bar\gamma}_{i-1}|^2 & = |\xi|^2|\partial_{tt}\Che(a,\lambda\cdot)|^2+O(\lambda^{-1})\\
& \leqslant C^2|\xi|^2a^2+O(\lambda^{-1})\\
& \leqslant C^2(1-\delta)(\bar k^2 - |\xi|^2)+O(\lambda^{-1})\\
& \leqslant C^2(1-\tfrac{\delta}{2})(\bar k^2 - |\xi|^2)
\end{aligned}$$
and -- since curvature is parameter-invariant -- this implies the estimate \eqref{curvesc2}
$$\begin{aligned}
\|\ddot\gamma_{i}-\ddot\gamma_{i-1}\|_{C^0(\I)} & \leqslant C\sqrt{1-\tfrac{\delta}{2}}\|\dot\varphi\|^2_{C^0(\I)}\|k^2 - k_{\gamma_{i-1}}^2\|_{C^0(\I)}^{\sfrac{1}{2}}+\|\ddot\varphi\|_{C^0(\I)}O(\lambda^{-1})\\
& \leqslant C\|\dot\gamma_{i-1}\|^2_{C^0(\I)}\|k^2 - k_{\gamma_{i-1}}^2\|_{C^0(\I)}^{\sfrac{1}{2}}.
\end{aligned}$$
Using \eqref{circleequation} we obtain \eqref{curvaturec0} and \eqref{curvatureok} from
$$\begin{aligned}
\bar k^2 - k_{\bar{\gamma}_{i}}^2 & = \bar k^2 - |\ddot{\bar\gamma}_{i}-\dot{\bar\gamma}_{i}\langle\dot{\bar\gamma}_{i},\ddot{\bar\gamma}_{i}\rangle|^2 + O(\lambda^{-1}) \\
& =\bar k^2 - \left[(1+\partial_{tt}\Che_1(a,\lambda\cdot))^2+(\partial_{tt}\Che_2(a,\lambda\cdot))^2\right]|\xi|^2+O(\lambda^{-1})\\
& =\bar k^2 - (1+a^2)|\xi|^2+O(\lambda^{-1})\\
& = \delta(\bar k^2 - k_{\bar\gamma_{i-1}}^2)+O(\lambda^{-1})
\end{aligned}$$
provided $\lambda \gg 1$ and $\delta\ll 1$. We can ensure that $\gamma_{i}$ is an immersion, since $\gamma_i$ and $\gamma_{i-1}$ can be made arbitrarily $C^1$-close and the immersions form an open set in $C^1(\I,\R^n)$. In the same way we can ensure that $\gamma_i$ is an embedding if $\gamma_{i-1}$ is an embedding. If $\gamma_{i-1}$ is closed, $\gamma_i$ is closed if $\lambda\in({2\pi}/{\varphi(b)})\N$ (Fig. \ref{bild}).
\end{proof}
\begin{figure}
\begin{center}
\includegraphics[scale=0.7]{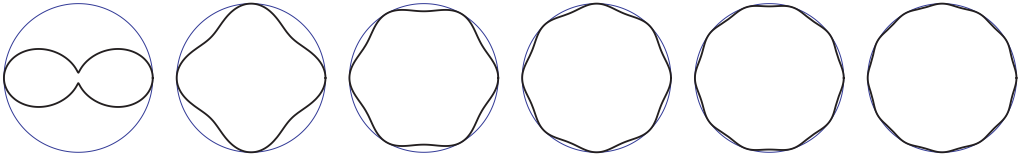}
\caption{Illustration of the ``top view'' of $\bar\gamma_i$ if $\bar\gamma_{i-1}$ is a unit circle, $a(t)=\sqrt{8}$ and $\lambda=1,2,\ldots,6$.}
\label{bild}\end{center} 
\end{figure}
\begin{remark}
The foregoing proof might be simplified if $n\geqslant 4$. In this case, there exist two mutually orthogonal vector fields $\zeta_1$ and $\zeta_2$ of length $|\xi|$ such that $\zeta_1,\zeta_2\in(\operatorname{span}\{\dot{\bar\gamma}_{i-1},\xi\})^\perp$. If $\gamma_{i-1}$ is closed, $\zeta_1$ and $\zeta_2$ can also be chosen to be closed. Then the formula \eqref{ansatzmitckurven} involving the rather complicated function $\Che$ can be replaced by a modified \emph{Nash Twist} (used by Nash in the proof of his celebrated $C^1$-isometric embedding theorem \cite{nash2}):
$$\bar{\gamma}_{i}(t)=\bar\gamma_{i-1}(t)+\frac{a(t)}{\lambda^2}\bigg[\cos(\lambda t)\zeta_1(t)+\sin(\lambda t)\zeta_2(t)\bigg].
$$
In this respect, formula \eqref{ansatzmitckurven} is an analogue to the \emph{Strain}, Kuiper used in \cite{kuiper} in order to lower the codimension requirement in the Nash-Kuiper theorem.
\end{remark}

\section{Iteration and Application to Knot Theory}
Using a $C^2$-perturbation we may assume that the curvature $k_{\gamma_0}$ of the initial curve $\gamma_0$ in Theorem \ref{main} is never zero. This is the content of the following Lemma which is due to Ghomi \cite[Lemma 5.3]{ghomi}, but we will give the proof in our slightly different setting:
\begin{lemma}[Ghomi]\label{genericity}
Let $\gamma\in C^2(\I,\R^n),~n\geqslant 3$ and let $k\in C^0(\I)$ satisfy $k>k_{\gamma}$. Then for every $\varepsilon>0$ there exists $\hat\gamma\in C^2(\I,\R^n)$ satisfying $k>k_{\hat\gamma}>0$ and $\|\gamma-\hat\gamma\|_{C^2(\I)}<\varepsilon$.
\end{lemma}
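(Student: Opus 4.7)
The upper bound $k>k_{\hat\gamma}$ will come for free from a sufficiently close $C^2$-approximation. Since $k-k_\gamma$ is a positive continuous function on the compact interval $\I$, it attains a positive minimum $\rho>0$; the curvature functional depends continuously on $\gamma$ in the $C^2$-topology among immersions, so $\|\gamma-\hat\gamma\|_{C^2(\I)}$ small enough forces $k_{\hat\gamma}<k_\gamma+\rho/2<k$ uniformly on $\I$. The substantive task is thus to produce $\hat\gamma$ that is $C^2$-close to $\gamma$ and satisfies $k_{\hat\gamma}>0$ everywhere.

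The plan is to reduce to the case of finitely many zeros of $k_\gamma$ and then destroy each of them by a localized bump perturbation in a normal direction. By componentwise Weierstrass approximation I first replace $\gamma$ by a real-analytic immersion $\tilde\gamma$ with $\|\gamma-\tilde\gamma\|_{C^2(\I)}<\varepsilon/2$. Since $k_{\tilde\gamma}$ is then real-analytic on $\I$, its zero set is either empty (and we are done), all of $\I$ (meaning $\tilde\gamma$ parametrizes a line segment; a small rotating perturbation $\eta[\cos(\lambda t)\,e_1+\sin(\lambda t)\,e_2]$ with $e_1,e_2$ orthonormal and perpendicular to $\dot{\tilde\gamma}$ — available since $n\geqslant 3$ — introduces a uniformly small curvature everywhere), or finite, say $\{t_1,\ldots,t_N\}$. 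In the last case, for each $t_j$ pick a unit vector $\nu_j\perp\dot{\tilde\gamma}(t_j)$, extend it smoothly to a small neighborhood $U_j$ of $t_j$ with the $U_j$ pairwise disjoint, and choose bumps $\phi_j\in C^\infty_c(U_j)$ with $\phi_j(t_j)=\phi_j'(t_j)=0$ and $\phi_j''(t_j)=1$. Then I set
$$\hat\gamma(t)\coloneqq\tilde\gamma(t)+\sum_{j=1}^N\eta_j\,\phi_j(t)\,\nu_j(t)$$
with $\eta_j>0$ small. The vanishing of $\phi_j$ and $\phi_j'$ at $t_j$ preserves $\tilde\gamma(t_j)$ and $\dot{\tilde\gamma}(t_j)$, while adding acceleration $\eta_j\nu_j(t_j)\perp\dot{\tilde\gamma}(t_j)$; together with $\ddot{\tilde\gamma}(t_j)\parallel\dot{\tilde\gamma}(t_j)$ this forces $k_{\hat\gamma}(t_j)>0$, hence $k_{\hat\gamma}>0$ on a neighborhood of each $t_j$ by continuity.

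The main obstacle is to verify that the bumps do not create new zeros of curvature elsewhere. Outside $\bigcup_j U_j$ the real-analytic $k_{\tilde\gamma}$ is bounded below by some $\rho'>0$, so sufficiently small $\eta_j$ preserve $k_{\hat\gamma}\geqslant\rho'/2$ there. Within each $U_j$ one chooses the bump so that $\phi_j''\equiv 1$ on a plateau concentric with $t_j$: on this plateau $\ddot{\tilde\gamma}_\perp$ is small (approaching $0$ at $t_j$) and is dominated by the added normal acceleration $\eta_j\nu_j$, whereas on the transition annulus between the plateau and $\partial U_j$ the original $\ddot{\tilde\gamma}_\perp$ is bounded below by a constant independent of $\eta_j$, so a sufficiently small $\eta_j$ cannot cancel it. Imposing the total budget $\sum_j\eta_j\|\phi_j\|_{C^2}<\varepsilon/2$ and combining with the Weierstrass step delivers the required $\hat\gamma$, and the condition $k_{\hat\gamma}<k$ follows from the opening remark.
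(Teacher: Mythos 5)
Your strategy---real-analytic approximation followed by explicit local bump perturbations---is genuinely different from the paper's, which disposes of the positivity of curvature in one stroke via Thom's transversality theorem: the set $\Ell\subset J^2(\I,\R^n)$ of $2$-jets whose last two components are linearly dependent has dimension $2n+2$, so for $n\geqslant 3$ one has $\dim\I+\dim\Ell=2n+3<3n+1=\dim J^2(\I,\R^n)$, and a generic $C^2$-small perturbation $\hat\gamma$ has $j^2\hat\gamma$ disjoint from $\Ell$, i.e.\ $k_{\hat\gamma}>0$. (A small remark: $k_{\tilde\gamma}$ itself is generally \emph{not} real-analytic---it is a square root---but $k_{\tilde\gamma}^2$ is, and that suffices for your zero-set trichotomy.)

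The genuine gap is in the plateau/transition dichotomy. For the added acceleration $\eta_j\nu_j$ to ``dominate'' $\ddot{\tilde\gamma}_\perp$ on the plateau you need $|\ddot{\tilde\gamma}_\perp|<\eta_j$ there, which forces the plateau radius to shrink as $\eta_j\to 0$ since $\ddot{\tilde\gamma}_\perp$ vanishes only at $t_j$. But then the inner edge of the transition annulus moves into the region where $\ddot{\tilde\gamma}_\perp$ is small, so the lower bound for $|\ddot{\tilde\gamma}_\perp|$ on the annulus is \emph{not} a constant independent of $\eta_j$---by continuity at the plateau boundary it is itself at most comparable to $\eta_j$. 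The two requirements you impose are mutually exclusive exactly at that boundary, for every ordering of the choices of plateau size and $\eta_j$. The repair is to use $n\geqslant 3$ more seriously than ``some $\nu_j\perp\dot{\tilde\gamma}(t_j)$'': choose $\nu_j$ on $U_j$ \emph{orthogonal to the whole osculating plane} $\operatorname{span}\{\dot{\tilde\gamma},\ddot{\tilde\gamma}\}$. (This unit field extends smoothly across the isolated zero of curvature because, $\tilde\gamma$ being real-analytic, one may factor $\ddot{\tilde\gamma}_\perp(t)=(t-t_j)^m v(t)$ with $v(t_j)\neq 0$ and take $\nu_j$ orthogonal to $\dot{\tilde\gamma}$ and $v$.) Then $\ddot{\tilde\gamma}_\perp(t)$ and $\eta_j\phi_j''(t)\nu_j(t)$ are pointwise orthogonal, so they can never cancel: their sum is nonzero wherever either term is, which---once $\eta_j>0$---is all of $U_j$, and no plateau-versus-annulus bookkeeping is needed. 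As written, with $\nu_j$ constrained only at the single point $t_j$, cancellations inside $U_j$ cannot be excluded and the proof does not close.
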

\begin{proof}
Observe that the curvature of $\gamma$ vanishes if and only if $\dot\gamma$ and $\ddot\gamma$ are linearly dependent. Consider therefore the set
$$\Ell\subset J^2(\I,\R^n)\cong \I\times\R^n\times\R^n\times\R^n$$
containing all the elements of $J^2(\I,\R^n)$ having linearly dependent last two components. Observe that $\dim\Ell=2n+2$ and $\dim (J^2(\I,\R^n))=3n+1$. By Thom's transversality Theorem, there exists a dense set of curves $\hat\gamma\in C^2(\I,\R^n)$ such that $j^2\hat\gamma$ is transversal to $\Ell$. Since $n\geqslant 3$, we obtain
$$\dim \I+\dim\Ell = 2n+3 < 3n+1=\dim (J^2(\I,\R^n))$$
 and we conclude that $j^2\hat\gamma$ and $\Ell$ do not intersect. By density we can choose $\hat\gamma$ in such a way that $\|\gamma-\hat\gamma\|_{C^2(\I)}<\varepsilon$. This inequality implies that we can also obtain $\|k_{\gamma}-k_{\hat\gamma}\|_{C^0(\I)}<\varepsilon$ and hence we can choose $\hat\gamma$ satisfying $k>k_{\hat\gamma}>0$.\end{proof}

\begin{proof}[Proof of Theorem \ref{main}]
In view of Lemma \ref{genericity} we can assume that $\gamma_0$ has non-vanishing curvature. Using standard regularization we can assume in addition that $\gamma_0\in C^\infty(\I,\R^n)$, since the second derivative of the mollification of $\gamma_0$ can be made arbitrarily close to $\ddot\gamma_0$. After these modifications, $\gamma_0$ is still an immersion (embedding), since immersions and embeddings form an open set in $C^1(\I,\R^n)$ and we can use Proposition \ref{stepcurves} iteratively starting with $\gamma_0$ and a sequence $\varepsilon_i\to 0$ such that
\begin{equation}\label{summenepsilon}\sum_{i=1}^\infty\varepsilon_i=\varepsilon\text{ and }\sum_{i=1}^\infty\sqrt{\varepsilon_i}<\infty.\end{equation}
This implies the uniform estimate
$$\begin{aligned}
\|\dot\gamma_{j}\|_{C^0(\I)}&\leqslant\|\dot\gamma_0\|_{C^0(\I)}+\sum_{i=1}^j\|\dot\gamma_{i}-\dot\gamma_{i-1}\|_{C^0(\I)}\leqslant  \|\dot\gamma_0\|_{C^0(\I)}+\varepsilon
\end{aligned}$$
and hence we find for $j>i$ using \eqref{curvesc1}, \eqref{curvesc2} and \eqref{summenepsilon}:
$$\begin{aligned}
\|\gamma_j-\gamma_i\|_{C^2(\I)}&Ê\leqslant \sum_{\ell=i+1}^j\|\gamma_{\ell}-\gamma_{\ell-1}\|_{C^2(\I)}\\
& \leqslant \sum_{\ell=i+1}^\infty\varepsilon_\ell + C(\|\dot\gamma_0\|_{C^0(\I)}+\varepsilon)^2\sum_{\ell=i+1}^\infty\sqrt{\varepsilon_{\ell-1}}\stackrel{j,i\to\infty}{\longrightarrow}0.\end{aligned}$$
The sequence $(\gamma_i)_{i\in\N}$ is thus Cauchy in $C^2(\I,\R^n)$ and the estimate \eqref{curvaturec0} implies that the limit map $\widetilde\gamma$ has curvature $k$. Using \eqref{curvesc1} and \eqref{summenepsilon} we find that the homotopy $\gamma_s = (1-s)\gamma+s\widetilde\gamma$ satisfies
$$
\|\gamma_s-\gamma\|_{C^1(\I)}\leqslant s\|\widetilde\gamma-\gamma\|_{C^1(\I)}\leqslant \varepsilon.
$$
This concludes the proof since embeddings and immersions form an open set in $C^1(\I,\R^n)$.
\end{proof}
\begin{remark}
The statement about the isotopy in Theorem \ref{main} becomes relevant only if $\gamma_0$ is a knot in $\R^3$. In this case -- and since Proposition \ref{stepcurves} may produce closed curves out of closed curves -- we obtain as an application of Theorem \ref{main}\end{remark}
\begin{coro}\label{knots}
Let $\gamma_0\in C^2(S^1,\R^3)$ be a knot. Then for any positive $k\in C^\infty(S^1)$, there exists a knot $\gamma_1\in C^2(S^1,\R^3)$ in the same isotopy class with curvature $k$.
\end{coro}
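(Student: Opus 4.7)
The plan is to reduce Corollary \ref{knots} to Theorem \ref{main} after a preliminary rescaling. The obstruction is that Theorem \ref{main} requires $k > k_{\gamma_0}$, which is not assumed here. However, since $k$ is continuous on the compact $S^1$, it is bounded below by a positive constant, so for $R > 0$ sufficiently large the rescaled knot $R\gamma_0$ has curvature $k_{R\gamma_0} = k_{\gamma_0}/R < k$ everywhere on $S^1$. The straight-line homotopy $\sigma_s = (1 + s(R-1))\gamma_0$ is an isotopy of $C^2$-embeddings between $\gamma_0$ and $R\gamma_0$ (each scaling factor is positive), so these two curves represent the same knot type.

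With the strict inequality $k > k_{R\gamma_0}$ in hand, I would apply the embedded form of Theorem \ref{main} to $R\gamma_0$ with target curvature $k$ and some $\varepsilon > 0$. This yields an isotopy of $C^2$-embeddings from $R\gamma_0$ to a curve $\widetilde\gamma \in C^2(S^1,\R^3)$ with $k_{\widetilde\gamma} = k$. Concatenating $\sigma$ with the isotopy supplied by Theorem \ref{main} produces an isotopy from the original $\gamma_0$ to $\widetilde\gamma$, so $\widetilde\gamma$ is a knot of curvature $k$ in the prescribed isotopy class, which is the desired $\gamma_1$.

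The one point requiring care is that Theorem \ref{main} is stated for curves on an interval $\I = [0,b]$, while Corollary \ref{knots} concerns closed curves on $S^1$. As the remark preceding the corollary indicates, the proof of Theorem \ref{main} carries over for closed curves provided that at each step of Proposition \ref{stepcurves} one chooses $\lambda \in (2\pi/\varphi(b))\N$ and selects the auxiliary vector field $\zeta$ to be closed. For $n = 3$ the closed choice $\zeta = \dot{\bar\gamma}_{i-1} \times \xi$ is automatic, so the iterative scheme from the proof of Theorem \ref{main} (including the application of Ghomi's Lemma \ref{genericity} to ensure nonvanishing curvature at the outset) produces a Cauchy sequence in $C^2(S^1,\R^3)$ of closed embeddings converging to $\widetilde\gamma$. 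This closed-curve bookkeeping is the main (and essentially only) technical obstacle; all analytic estimates are unchanged from the interval setting.
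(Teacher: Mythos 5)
Your proposal is correct and matches the paper's argument: rescale $\gamma_0$ by a positive constant to make its curvature strictly smaller than $k$, note that rescaling is an isotopy, and then apply the embedded/closed-curve version of Theorem~\ref{main}. The extra remarks you make about choosing $\lambda\in(2\pi/\varphi(b))\N$ and a closed $\zeta$ in Proposition~\ref{stepcurves} are exactly what the paper's preceding remark refers to, so the two proofs are essentially identical.
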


\begin{proof}
Observe that for $c>0$ it holds that $k_{c\cdot\gamma_0}=k_{\gamma_0}/c$ and $\gamma_0$ and $c\cdot\gamma_0$ are isotopic. Hence we can choose $c$ in such a way that 
$$\|k (c\cdot\gamma_0)\|_{C^0(S^1)}<\min_{t\in S^1}k(t)$$
and apply Theorem \ref{main}.
\end{proof}

\begin{remark}
By choosing $k$ in Corollary \ref{knots} to be constant, we recover the main result of \cite{mcatee} and a $C^2$-version of the main result of \cite{ghomi}.\end{remark}

\end{document}